\newlength{\guillotine}
\newtheorem{thm}{Theorem}[section]
\newtheorem{lemma}[thm]{Lemma}
\newtheorem{example}[thm]{Example}
\theoremstyle{remark}
\newtheorem{rem}[thm]{Remark}
\def\phi{\varphi}
\def\rho{\varrho}
\def\epsilon{\varepsilon}
\begin{document}
\date{}

\title{
Maximizing dimension for Bernoulli measures \\ and the Gauss map
}
\author{ Mark Pollicott\thanks{The author was supported by ERC grant ``Resonances''.  The author would like to thank the referee for his many helpful comments.}}

\maketitle

\begin{abstract}
We give a short proof that  there exists a countable state Bernoulli measure maximizing the dimension of their images under the continued fraction expansion.
\end{abstract}

\section{Introduction}
Let $T: [0,1) \to [0,1)$ be the usual Gauss map defined by
$$
T(x) = 
\begin{cases}
\frac{1}{x}\quad  \hbox{\rm (mod $1$)} & \hbox{ if } 0 < x < 1\cr
0 & \hbox{ if } x=0.\cr
\end{cases}
$$
For each   infinite probability vector in 
$$ \mathcal P = \left\{
\underline p = (p_k)_{k=1}^\infty \in [0,1]^\mathbb N\hbox{ : } \sum_{k=1}^\infty p_k = 1
\right\}$$ we can associate a
 natural  $T$-invariant measure $\mu_{\underline p} := \nu_{\underline p}\pi^{-1}$,
where  $\nu_{\underline p}$ 
 is the usual
 countable state Bernoulli measure
 on $\mathbb N^{\mathbb Z}$
and  $\pi: \mathbb N^{\mathbb N} \to [0,1)$ is the usual continued fraction expansion 
$\pi(x_n) = [x_1, x_2, x_3, \cdots]$.
We can define the {\it dimension} of the measure $\mu_{\underline p}$ by
$$
d(\mu_{\underline p}) := \inf \left\{\dim_H(B) \hbox{ : } \hbox{\rm $B$ is a Borel set with }\mu_{\underline p}(B)=1\right\}
$$
where $\dim_H(B)$ denotes  the Hausdorff dimension of $B$ (see \cite{Falconer}, p. 229).
We  define the {\it entropy} and {\it Lyapunov exponents} of the measure $\mu_{\underline p}$
by
$$
h(\mu_{\underline p}) = -\sum_{k=1}^\infty p_k \log p_k
\hbox{ and }
\lambda(\mu_{\underline p}) 
= \int \log |T'| d \mu_{\underline p}(x),
$$
whenever they are finite, 
and then we can write 
$d(\mu_{\underline p}) 
= \frac{h(\mu_{\underline p})}{\lambda(\mu_{\underline p})} 
> 0$.  
Kifer, Peres and Weiss \cite{KPW} observed 
that $d(\mu_{\underline p})$  is uniformly bounded away 
from $1$ (making use of a thermodynamic approach of Walters)
\footnote{In \cite{KPW} they showed $D < 1 - 10^{-7}$, but in unpublished work Jenkinson and the author have improved this to $D < 1 - 5 \times 10^{-5}$}
i.e., 
$$
D:= \sup\left\{
d(\mu_{\underline p})
\hbox{ : } 
\underline p \in \mathcal P 
\right\} < 1. \eqno(1.1)
$$
We will give a simple proof of the following result.

\begin{thm}[Exact dimensionality]
\label{main}
There exists  
${\underline p^\dagger} \in \mathcal P$ 
such that:
\begin{enumerate}
\item
$d(\mu_{{\underline p}^\dagger})= D$, i.e., 
$\underline p^\dagger$ realises the supremum in (1.1);
\item   $p_k^{\dagger} \asymp k^{-2 D}$, i.e., $\exists c>1$ such that  
 $\frac{1}{ck^{2D}}
 \leq  
p_k^{\dagger}
 \leq 
  \frac{c}{k^{2D}}$,  for $k \geq 1$;
 and 
\item 
$\mu_{{\underline p}^\dagger}$ is ergodic.
\end{enumerate}
\end{thm}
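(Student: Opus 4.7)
The plan is to solve the variational problem by combining an a priori moment estimate (to enforce tightness) with upper semi-continuity of the dimension functional, and then to read off the form of the optimizer from its Euler--Lagrange equation. The first move is to decompose the Lyapunov exponent: since $|T'(x)|=1/x^{2}$ and the first continued-fraction digit being $k$ forces $x=1/(k+\alpha)$ with $\alpha=[0,x_{2},x_{3},\dots]\in[0,1]$, one has
\[
\lambda(\mu_{\underline p}) \;=\; 2\sum_{k=1}^{\infty} p_{k}\log k \;+\; R(\underline p),\qquad 0\le R(\underline p)\le 2\log 2,
\]
so $\lambda$ is linear in $\underline p$ up to a uniformly bounded perturbation, and is finite precisely when $\sum p_{k}\log k<\infty$. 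A formal Lagrange-multiplier argument applied to $F_{D}(\underline p):=h(\mu_{\underline p})-D\lambda(\mu_{\underline p})$ under $\sum p_{k}=1$, treating $R$ and its formal gradient as $O(1)$, gives $-\log p_{k}^{\dagger}-1-2D\log k=c+O(1)$ at any interior maximizer, hence $p_{k}^{\dagger}\asymp k^{-2D}$. Because $D>1/2$ (justified below), $\sum k^{-2D}<\infty$, so this is consistent with $\underline p^{\dagger}\in\mathcal P$; this will give (ii) once a maximizer is known to exist.

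The main work is existence. First note $D>1/2$: the test measure $\tilde p_{k}=6/(\pi^{2}k^{2})$ has entropy $h(\mu_{\tilde p})=-\log(6/\pi^{2})+2(6/\pi^{2})(-\zeta'(2))\approx 1.64$ and Lyapunov exponent at most $2(6/\pi^{2})(-\zeta'(2))+2\log 2\approx 2.53$, so $d(\mu_{\tilde p})\ge 0.64$. Now let $\{\underline p^{(n)}\}$ be a maximizing sequence. The key tightness estimate is that the log-moment $L_{n}:=\sum_{k}p_{k}^{(n)}\log k$ is bounded uniformly in $n$. For this I would use that, among all probability vectors on $\N$ with $\sum p_{k}\log k=L$, the entropy is maximized by the zeta law $p_{k}\propto k^{-\beta(L)}$ (Lagrange); direct asymptotics as $\beta\downarrow 1$ yield $h_{\max}(L)=\log\zeta(\beta)+\beta L\sim L$, so
\[
\frac{h(\underline p)}{\lambda(\underline p)}\;\le\;\frac{h_{\max}(L)}{2L+O(1)}\;\longrightarrow\;\tfrac{1}{2}\qquad\text{as }L\to\infty.
\]
Since $D>1/2$, this forces $L_{n}\le L_{0}$ uniformly in $n$, whence Markov's inequality yields tightness of $\{\underline p^{(n)}\}$ on $\N$. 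Extracting a pointwise subsequential limit $p_{k}^{(n)}\to p_{k}^{\dagger}$, tightness gives $\sum p_{k}^{\dagger}=1$. Upper semi-continuity of entropy on tight, pointwise-convergent sequences with no mass loss, combined with lower semi-continuity of $\sum p_{k}\log k$ by Fatou, yields $d(\mu_{\underline p^{\dagger}})\ge\limsup d(\mu_{\underline p^{(n)}})=D$, which by definition of $D$ is equality; this proves (i). Feeding $\underline p^{\dagger}$ into the Euler--Lagrange analysis of the first paragraph then makes (ii) rigorous.

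Finally, ergodicity (iii) is automatic once (ii) is known: since $p_{k}^{\dagger}>0$ for all $k\ge 1$, the measure $\nu_{\underline p^{\dagger}}$ is a full-support Bernoulli measure on $\N^{\N}$, hence shift-mixing and in particular shift-ergodic, and as $\pi$ intertwines the shift and $T$ the pushforward $\mu_{\underline p^{\dagger}}=\pi_{*}\nu_{\underline p^{\dagger}}$ is $T$-ergodic. The principal obstacle I anticipate is the tightness estimate: one needs to compute $h_{\max}(L)$ via the zeta laws $p_{k}\propto k^{-\beta}$ as $\beta\downarrow 1$ and extract the asymptotic $h_{\max}(L)/(2L)\to 1/2$ cleanly. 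The other subtle issue---the nonlinear dependence of $\alpha=[0,x_{2},x_{3},\dots]$ on $\underline p$ inside the integrand defining $\lambda$---is comparatively cosmetic, since $R(\underline p)$ and its formal partial derivatives are all uniformly bounded, so they only perturb the Euler--Lagrange balance by an $O(1)$ constant that is absorbed into the $\asymp$ in (ii).
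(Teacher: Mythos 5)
Your strategy has the right shape, and two of its ingredients are genuinely good: the decomposition $\lambda(\mu_{\underline p})=2\sum_k p_k\log k+R(\underline p)$ with $0\le R\le 2\log 2$, combined with the zeta-law bound $h\le L+O(\log L)$ for vectors with log-moment $L$, correctly forces $\sum_k p_k^{(n)}\log k$ to stay bounded along any maximizing sequence (this is close in spirit to the Kifer--Peres--Weiss dimension-gap estimate); and your factor-map argument for part (iii) ($\pi\circ\sigma=T\circ\pi$ and Bernoulli measures are shift-ergodic) is correct and more elementary than the paper's transfer-operator/mixing argument. However, there are two genuine gaps. The first is the step ``treating $R$ and its formal gradient as $O(1)$,'' which you call cosmetic but which is in fact the technical heart of the problem. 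The Lyapunov exponent is $\int\log|T'|\,d\mu_{\underline p}$ where the measure $\mu_{\underline p}$ itself depends on every coordinate of $\underline p$: perturbing $p_i$ changes the conditional law of the tail $[0;x_2,x_3,\dots]$ inside every cylinder, so $\partial\lambda/\partial p_i$ is a correlation-type quantity and is not obviously the naive term-by-term derivative $2\log i+O(1)$. The paper's Lemma \ref{lemmatwo} exists precisely to justify this: writing $\partial\lambda/\partial p_i$ as a mixed second derivative of pressure, the infinite correlation series in (2.5) is shown to vanish because $\mathcal L_{f_{\underline p^*}}(\chi_{[i]}-p_i^*)=0$, which yields $\frac{\partial\lambda}{\partial p_i}=\frac{1}{p_i}\int_{[i]}\log|T'|\,d\mu-\lambda$ and hence the two-sided bound (2.10). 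Without some version of this computation your Euler--Lagrange equation, and therefore part (ii), is unsupported. Relatedly, you invoke the multiplier rule on the infinite-dimensional simplex at a maximizer not yet shown to exist or to be interior; the paper sidesteps this by doing the Lagrange analysis on the finite-dimensional truncations $\mathcal P_n$ and passing the resulting uniform two-sided bounds to the limit.

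The second gap is that ``upper semi-continuity of entropy on tight, pointwise-convergent sequences with no mass loss'' is false on a countable alphabet, even under a uniform log-moment bound. Take $p^{(n)}_1=1-1/\log n$ and $p^{(n)}_k=1/(n\log n)$ for $2\le k\le n+1$: this family is tight, has $\sum_k p^{(n)}_k\log k=O(1)$, converges pointwise to $(1,0,0,\dots)$ with no mass loss, yet $h(p^{(n)})\to 1>0$. Entropy is \emph{lower} semicontinuous here (Fatou, since $-p\log p\ge 0$), which is the wrong direction for your argument. The conclusion $d(\mu_{\underline p^\dagger})=D$ can be rescued, because your own tail estimate shows the entropy escaping to infinity is at most half of the Lyapunov exponent escaping with it, and since $D>\tfrac12$ such escape could only increase the ratio in the limit --- but that mediant-type argument must actually be made. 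The paper avoids the issue entirely: the Euler--Lagrange bound (2.10), applied to each finite-dimensional maximizer, gives uniform polynomial decay of $p^{(n)}_k$ with exponent $2(D-\epsilon)>1$, which yields uniform integrability of $-p_k\log p_k$ and hence genuine convergence of the entropies and Lyapunov exponents along the subsequence. In short: your outline identifies the correct answer and a workable skeleton, but the two steps you defer (differentiating $\lambda$ in $\underline p$, and passing the dimension to the limit) are exactly where the paper has to work.
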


The first part of the theorem  answers a question the author  was asked by K. Burns.
\footnote{At the {\it Workshop on Hyperbolic Dynamics} (Trieste, 19-23 June 2017)}
I posed the question to my graduate student N. Jurga
who, in collaboration with my PDRA S. Baker,  gave an elementary proof. 
 Their  proof is based on an iterative  construction of a sequence of measures
 $\mu_{\underline p_n}$ with increasing dimension $d(\mu_{\underline p_n})$ by redistributing the weights in the probability vectors $\underline p_n$.
In contrast, the  proof presented below uses  the classical method of Lagrange multipliers on finite dimensional subsets of $\mathcal P$, before taking a limit, and has the merits  of being  short and easy to generalize. 
Part 2.  appears to be new.

\section{Proof of Theorem \ref{main}}
We can begin with the following  standard lemma (see  \cite{Fan}, Lemma 3.2, based on   \cite{KP}).
\begin{lemma}\label{lemma}
If $d(\mu_{\underline p}) > \frac{1}{2}$ then 
$h(\mu_{\underline p}), 
\lambda(\mu_{\underline p}) <+\infty$.
\end{lemma}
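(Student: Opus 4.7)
The plan is to prove the contrapositive: if $\lambda(\mu_{\underline p})=+\infty$ or $h(\mu_{\underline p})=+\infty$, then $d(\mu_{\underline p})\le 1/2$. I use two ingredients: a Gibbs-type pressure bound that reduces the problem to the case $\lambda=+\infty$, and a pointwise (cylinder) dimension estimate that handles that case.

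For the Gibbs step, fix any $s>1/2$; then $Z_s:=\sum_{k\ge 1}k^{-2s}<\infty$ and $q_k:=k^{-2s}/Z_s$ is a probability vector on $\mathbb N$. The classical relative entropy inequality $\sum_k p_k\log(p_k/q_k)\ge 0$ gives
\[
h(\mu_{\underline p}) \;\le\; 2s\sum_k p_k\log k + \log Z_s.
\]
Since $|T'(x)|=1/x^2\in[k^2,(k+1)^2]$ on the cylinder $[k]$, a short calculation yields $\lambda(\mu_{\underline p})=2\sum_k p_k\log k + O(1)$, so we obtain the key bound $h(\mu_{\underline p})\le s\,\lambda(\mu_{\underline p})+C_s$ for a finite constant $C_s$. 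In particular $\lambda<\infty$ already forces $h<\infty$, in which case $d(\mu_{\underline p})=h/\lambda\le s+C_s/\lambda$; so if we assume $d(\mu_{\underline p})>1/2$ we may pick $s\in(1/2,d(\mu_{\underline p}))$ and obtain $\lambda\le C_s/(d-s)<\infty$. It therefore suffices to show that $\lambda(\mu_{\underline p})=+\infty$ forces $d(\mu_{\underline p})\le 1/2$.

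For this, let $C_n(x)$ denote the level-$n$ cylinder of $x$. Bounded distortion together with the denominator recurrence $q_n=x_n q_{n-1}+q_{n-2}$ give $|C_n(x)|\asymp q_n(x)^{-2}$ and $\log q_n(x)=\sum_{i=1}^n\log x_i+O(n)$. Since cylinders are comparable to balls under bounded distortion,
\[
\underline d_{\mu_{\underline p}}(x) \;\le\; \liminf_n \frac{(1/n)\sum_{i=1}^n(-\log p_{x_i})}{2\,(1/n)\sum_{i=1}^n\log x_i + O(1/n)},
\]
and the task is to bound the right-hand side above by $1/2$ for $\mu_{\underline p}$-a.e.\ $x$. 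If moreover $h(\mu_{\underline p})<\infty$, the standard SLLN gives numerator $\to h$ while Kolmogorov's strong law for i.i.d.\ positive summands with infinite mean gives denominator $\to+\infty$, so the ratio tends to $0$ and in fact $d(\mu_{\underline p})=0$.

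The main obstacle is the remaining case $h(\mu_{\underline p})=\lambda(\mu_{\underline p})=+\infty$, where the cylinder ratio takes the indeterminate form $\infty/\infty$. I would attack it by applying the Gibbs inequality to the empirical digit distribution $\hat p^{(n)}_k:=n^{-1}\#\{i\le n:x_i=k\}$, obtaining the pointwise bound $H(\hat p^{(n)})\le 2s(1/n)\sum_{i\le n}\log x_i+\log Z_s$. The subtlety is that the numerator of the cylinder ratio is the \emph{cross-entropy} $\sum_k \hat p^{(n)}_k(-\log p_k)$, which differs from the empirical entropy $H(\hat p^{(n)})$ by the KL divergence $D(\hat p^{(n)}\|\underline p)$ and can blow up when $h=+\infty$. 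I would control this via a truncation at a large digit $N$, using that $x_i>N$ occurs with density $1-\sum_{k\le N}p_k$ on the $\mu_{\underline p}$-typical set, separating the small-digit contribution (controlled by the Gibbs bound applied to the truncated empirical distribution) from the tail (which contributes comparably to both numerator and denominator since $-\log p_k$ and $2\log k$ are linked by the same Gibbs relation), and finally diagonalizing $N=N(n)\to\infty$. This truncation/large-deviation hybrid is the technical core of the argument.
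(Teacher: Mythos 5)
First, note that the paper offers no proof of this lemma at all: it is quoted as a known standard result, with a pointer to Fan--Liao--Ma (their Lemma 3.2) and ultimately to Kinney--Pitcher. So there is no argument in the text to compare yours against; what you have written is a reconstruction of the standard proof, and most of it is sound. The reduction to the single case $\lambda(\mu_{\underline p})=+\infty$ via the Gibbs inequality $h(\mu_{\underline p})\le 2s\sum_k p_k\log k+\log Z_s$ is correct, as is the passage to lower local dimensions via cylinders and the treatment of the sub-case $h(\mu_{\underline p})<\infty$, $\lambda(\mu_{\underline p})=+\infty$, where the a.e.\ local dimension is indeed $0$.

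The gap is the case $h(\mu_{\underline p})=\lambda(\mu_{\underline p})=+\infty$, which you explicitly leave as a plan (``I would attack it by\dots'', ``this \dots is the technical core'') rather than a proof; as written the lemma is not established. Moreover, the truncation/large-deviation scheme you outline is more elaborate than necessary, and its unverified claims (densities of large digits on the typical set, the diagonalization $N=N(n)$) are precisely the delicate points. The case closes much more cleanly by running your own Gibbs decomposition pointwise rather than on the empirical measure: writing $-\log p_{x_i}=2s\log x_i+\log Z_s-\log\bigl(p_{x_i}/q_{x_i}\bigr)$ and summing over $i\le n$, the numerator of your cylinder ratio equals $2s\sum_{i\le n}\log x_i+n\log Z_s-\sum_{i\le n}\log\bigl(p_{x_i}/q_{x_i}\bigr)$. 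The random variable $\log\bigl(p_{X}/q_{X}\bigr)$ has integrable negative part, since $\log t\le t-1$ gives $\mathbb{E}\bigl[\log^{+}(q_X/p_X)\bigr]\le\sum_k(q_k-p_k)^{+}\le 1$; hence by the strong law in the extended form you already invoke for $\log x_i$, one has $n^{-1}\sum_{i\le n}\log\bigl(p_{x_i}/q_{x_i}\bigr)\to D(\underline p\,\Vert\,\underline q)\in[0,+\infty]$ almost surely. In particular the numerator is eventually at most $2s\sum_{i\le n}\log x_i+n(\log Z_s+1)$, while the denominator is $2\sum_{i\le n}\log x_i+O(n)$ with $n^{-1}\sum_{i\le n}\log x_i\to+\infty$, so the ratio has $\liminf\le s$ for every $s>1/2$. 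This gives $\underline d_{\mu_{\underline p}}(x)\le 1/2$ a.e.\ and hence $d(\mu_{\underline p})\le 1/2$, with no truncation needed, and it handles your $h<\infty$ sub-case simultaneously. (A small slip elsewhere: the error term in your displayed denominator should be $O(1)$, not $O(1/n)$, since the continued-fraction denominators satisfy $\log q_n(x)=\sum_{i\le n}\log x_i+O(n)$; this does not affect the argument.)
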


\noindent
Since it is easy to exhibit 
 $\underline p \in \mathcal P$ with
$
h(\mu_{\underline p}), 
\lambda(\mu_{\underline p})
 <+\infty$ and  
  $
  d(\mu_{\underline p}) > \frac{1}{2}$  we can 
  deduce $D > \frac{1}{2}$ and 
  use  Lemma \ref{lemma}  to write
  $$
D
=\sup
\left\{
\frac{h(\mu_{\underline p})}{\lambda(\mu_{\underline p})} 
\hbox{ : } {{\underline p} \in \mathcal P}
\right\}.
$$
Moreover,  we can approximate  any  $\underline  p \in \mathcal P$  
with $
h(\mu_{\underline p}), 
\lambda(\mu_{\underline p})
 <+\infty$ 
 by a probability vector $\underline p^*$
 with:
 $$
 p_k^* = 
 \begin{cases}
 p_1 + \epsilon_n &\mbox{ if } k=1 \mbox{ (where $\epsilon_n:=\sum_{l=n+1}^\infty p_l$ )}\\
p_k &\mbox{ if } 2 \leq k \leq n \\
 0 &\mbox{ if  } k > n
 \end{cases}
 $$
so that 
$\frac{h(\mu_{\underline p^*})}{\lambda(\mu_{\underline p^*})} $ is arbitrarily close to 
$\frac{h(\mu_{\underline p})}{\lambda(\mu_{\underline p})} $ for $n$ sufficiently large.  
For the entropy,  we have 
$$|h(\mu_{\underline p}) - h(\mu_{{\underline p^*}})| \leq |p_1\log p_1 - (p_1+\epsilon_n)\log(p_1+\epsilon_n)| + \sum_{k=n+1}^\infty p_k |\log p_k| \to 0 \hbox{ as $n \to +\infty$.}$$  
For the Lyapunov exponent, 
let $\epsilon > 0$
and $\log_M|T'(x)| := \min\{\log|T'(x)|, 2\log M\}$ 
then 
$$\left|
\lambda(\mu_{\underline p})
- \int \log_M|T'(x)|d\mu_{\underline p}(x)\right| \leq 2 \int_0^{1/M} \log \left(\frac{1}{xM} \right) d\mu_{\underline p}(x) < \epsilon$$
 for $M \in \mathbb N$ sufficiently large
(since $\sum_{n=1}^\infty p_n \log n \leq  \lambda(\mu_{\underline p}) <+\infty$)
  and there is a  corresponding inequality with $\underline p^*$ replacing $\underline p$.   
 We can next  bound 
 $$\left|\int \log_M|T'(x)|d\mu_{\underline p}(x) - \sum_{\underline i \in \mathbb N^N}  p_{\underline i}\log_M|T'(x_{\underline i})| \right| < \epsilon$$
  for $N$ sufficiently large, 
where  
$\underline i = (i_1, \cdots, i_N)$  gives the finite continued fraction $x_{\underline i} = [i_1, \cdots, i_N]$ 
and $p_{\underline i} = p_{i_1} \cdots p_{i_N}$, 
and again there is a  corresponding  inequality with $\underline p^*$ replacing $\underline p$.
Finally, we can bound 
$$
\left|\sum_{|\underline i|=N} p_{\underline i}\log_M|T'(x_{\underline i})| - \sum_{|\underline i|=N}  p_{\underline i}^*\log_M|T'(x_{\underline i})|\right|
\leq \log M \sum_{|\underline i|=N} |p_{\underline i} - p_{\underline i}^*| < \epsilon
$$
for $n$ sufficiently large.  (For the last inequality  first note that  for
those $\underline i$ with  $2 \leq i_j \leq n$ for $1\leq j\leq N$ then
$p_{\underline i} = p_{\underline i}^*$ and there is no contribution.
Furthermore, for  those terms with $\underline i$ for which there exists $1\leq
j \leq N$ with $i_j  > n$ the summation can be bounded by $(\epsilon_n + p_1 +
\cdots + p_n)^N - (p_1 + \cdots + p_n)^N \to 0$ as $n\to+\infty$.  Finally, the
remaining part of the summation comes from $\underline i$  with $ i_j \leq n$ for $1\leq j\leq N$ and at least one term being equal $1$, and this 
is $O(\epsilon_n)$.)
 The triangle inequality gives $|\lambda(\mu_{\underline p}) - \lambda (\mu_{\underline p^*})| < 5\epsilon$.
Therefore,   we can also write 
$$
D
= \sup_{n}\sup
\left\{
\frac{h(\mu_{\underline p^*})}{\lambda(\mu_{\underline p^*})} 
\mbox{ : } {{\underline p}^* \in \mathcal P_n}
\right\}, 
\eqno(2.1)
$$
where $\mathcal P_n$ ($n \geq 2$) is the finite dimensional simplex consisting  of the probability vectors 
$\underline p^* = (p_k^*)_{k=1}^\infty$ satisfying $p^*_k=0$, for $k>n$.

For each $n \geq 2$ we can extend the definition of 
$d(\mu_{\underline p^*}):=h(\mu_{\underline p^*})/\lambda(\mu_{\underline p^*})
$ to a  sufficiently small neighbourhood $U_n \supset \mathcal P_n $
so that 
 the function  $U_n \ni \underline p^* \mapsto d(\mu_{p^*})
$  is  well defined and   smooth.
We want to maximize this function  subject to the additional restriction 
$\sum_{k=1}^n p_k^*=1$ which   makes  it natural to 
 use the method of Lagrange multipliers.   This allows us to deduce  that a critical point satisfies
$$
\frac{\partial d(\mu_{\underline p^*})}{\partial p_i^*}
= 
\frac{\partial d(\mu_{\underline p^*})}{\partial p_j^*}
\hbox{ for $i \neq j$.} \eqno(2.2)
$$
 The  logarithmic derivative  of $d(\mu_{\underline p^*})$ 
takes  the form
$$
\frac{1}{d(\mu_{\underline p^*})}
\frac{\partial d(\mu_{\underline p^*})}{\partial p_i^*}
=
\frac{1}{h(\mu_{\underline p^*})}
\frac{\partial h(\mu_{\underline p^*})}{\partial p_i^*}
-
\frac{1}{\lambda(\mu_{\underline p^*})}
\frac{\partial \lambda(\mu_{\underline p^*})}{\partial p_i^*}
\hbox{ for } 1 \leq i \leq n. \eqno(2.3)
$$
We can rewrite the  right hand side of (2.3) using the following two lemmas.
The first follows directly from the definition of $h(\mu_{\underline p^*})$.

\begin{lemma}\label{lemmaone}
$
\frac{\partial h(\mu_{\underline p^*})}{\partial p_i^*}
= 
-(\log p_i^* + 1)
$.
\end{lemma}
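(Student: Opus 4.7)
The plan is a direct computation from the definition. Recall that for $\underline p^* \in \mathcal P_n$ (and its smooth extension to the open neighbourhood $U_n$) we have
\[
h(\mu_{\underline p^*}) = -\sum_{k=1}^n p_k^* \log p_k^*,
\]
a finite sum of terms $-p_k^* \log p_k^*$, each smooth on the region where $p_k^* > 0$. On $U_n$ the coordinates $p_1^*, \ldots, p_n^*$ are treated as independent variables; the simplex constraint $\sum_{k=1}^n p_k^* = 1$ is not imposed here (it is what produces the Lagrange multiplier in (2.2)).

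The only summand that depends on $p_i^*$ is the $k=i$ term, so by the product rule
\[
\frac{\partial h(\mu_{\underline p^*})}{\partial p_i^*} = \frac{\partial}{\partial p_i^*}\bigl(-p_i^* \log p_i^*\bigr) = -\log p_i^* - p_i^* \cdot \frac{1}{p_i^*} = -(\log p_i^* + 1),
\]
as claimed. There is essentially nothing to overcome: the only subtlety worth flagging is that one must differentiate on $U_n$ with the $p_k^*$ free, not on the constraint surface (where a naive substitution of $p_1^* = 1 - \sum_{k\geq 2} p_k^*$ would give a different, and for the Lagrange argument useless, expression). With the free-variable convention the formula is immediate.
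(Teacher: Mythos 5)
Your computation is correct and is exactly what the paper intends: it states that the lemma ``follows directly from the definition of $h(\mu_{\underline p^*})$,'' i.e.\ differentiating $-p_i^*\log p_i^*$ with the coordinates treated as free variables on $U_n$. Your remark about not imposing the simplex constraint before differentiating is a sensible clarification but does not change the argument.
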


We denote the Cantor sets  $E_n:= \{[x_1, x_2, x_3, \ldots ] \hbox{ : } x_1, x_2, x_3, \cdots \leq n\} \subset [0,1]$,  for $n \geq 2$.
For any H\"older continuous
function   $f: E_n \to \mathbb R$ we can define the {\it pressure} (restricted to $E_n$)  by 
$$
P(f) = \sup\left\{
h(\mu) + \int f d\mu \hbox{ : } \mu \hbox{ is a $T$-invariant probability measure supported on $E_n$}
\right\},
$$
where $h(\mu)$ is the entropy of the measure $\mu$, and there is  a unique measure $\mu_f$  realizing the supremum which 
is called the {\it equilibrium state for $f$}.
\begin{example}
We denote the intervals 
 $[i] := \left[\frac{1}{i+1}, \frac{1}{i}\right] \subset (0,1]$, for $i \geq 1$.
Then  $\mu_{\underline p^*}$ is the  equilibrium state for  $f_{\underline p^*} = \sum_{j=1}^n \chi_{[j]} \log p_j^*$.
\end{example}

 For H\"older continuous functions $f,g : E_n \to \mathbb R$ we have that $\mathbb R \ni t \mapsto P(f + tg) \in \mathbb R$ is smooth and 
$$
\frac{\partial P(f + t g)}{\partial t}|_{t=0} = \int g d\mu_f \eqno(2.4)
$$
 (see \cite{ruelle}, Question 5 (a) p.96 and \cite{PP}, Proposition 4.10).  
 For H\"older continuous functions $f,g_1,g_2 : E_n \to \mathbb R$ we have that $\mathbb R^2 \ni (t,s) \mapsto P(f + tg_1 + sg_2) \in \mathbb R$ is smooth and 
$$
\frac{\partial^2 P(f + t g_1 + sg_2)}{\partial t \partial s}|_{s=t=0} = \int (g_1-\overline {g_1})
(g_2 - \overline {g_2})  d\mu_f  + 2\sum_{n=1}^\infty \int (g_1-\overline {g_1}) ( g_2 - \overline {g_2} )
\circ \sigma^nd\mu_f\eqno(2.5)
$$
where we denote  $\overline {g_1} = \int g_1 d\mu $
and $\overline {g_2}  =  \int g_2 d\mu$
  (see \cite{ruelle}, Question 5 (b) p.96
and \cite{PP}, Proposition 4.11).

\medskip 
\begin{lemma}\label{lemmatwo}
$
\frac{1}{\lambda(\mu_{\underline p^*})}
\frac{\partial \lambda(\mu_{\underline p^*})}{\partial p_i^*}
=
\frac{1}{p_i^*}
 \frac{\int_{[i]} 
\log |T'|
d\mu_{\underline p^*}}{\int
\log |T'|
d\mu_{\underline p^*}} - 1
$
\end{lemma}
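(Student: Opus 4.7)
The plan is to exploit the observation recorded in the Example, namely that $\mu_{\underline p^*}$ is the equilibrium state for the (first-coordinate) potential $f_{\underline p^*} = \sum_{j=1}^n \chi_{[j]}\log p_j^*$, and then to compute the derivative of $\lambda(\mu_{\underline p^*}) = \int \log|T'|\,d\mu_{\underline p^*}$ using the pressure derivative formulas (2.4) and (2.5). A direct inspection gives
$$
\frac{\partial f_{\underline p^*}}{\partial p_i^*} = \frac{\chi_{[i]}}{p_i^*},
$$
so varying $p_i^*$ (treated as a free parameter, with the simplex constraint deferred to the Lagrange multiplier step) is the same as perturbing the potential in the direction $\chi_{[i]}/p_i^*$.

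By (2.4), $\lambda(\mu_{\underline p^*}) = \partial_s P(f_{\underline p^*} + s\log|T'|)|_{s=0}$, so interchanging the two derivatives I would write
$$
\frac{\partial \lambda(\mu_{\underline p^*})}{\partial p_i^*} = \frac{1}{p_i^*}\left.\frac{\partial^2 P(f_{\underline p^*}+u\chi_{[i]}+s\log|T'|)}{\partial u\,\partial s}\right|_{u=s=0}.
$$
Applying (2.5) with $g_1 = \chi_{[i]}$ and $g_2 = \log|T'|$ expresses this as the usual \textquotedblleft zero-lag covariance plus infinite sum of correlations\textquotedblright{} with respect to $\mu_{\underline p^*}$.

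The critical step, and the one I expect to do the real work, is to observe that the infinite correlation sum vanishes thanks to the Bernoulli structure of $\mu_{\underline p^*}$. The function $\chi_{[i]}$ depends only on the first coordinate, whereas $(\log|T'|)\circ \sigma^k$ depends only on the coordinates $x_{k+1},x_{k+2},\ldots$; under the product measure these functions are independent, so each term of the sum splits as a product of two zero-mean factors. Only the $k=0$ term survives, and it expands to
$$
\int(\chi_{[i]}-p_i^*)(\log|T'|-\lambda(\mu_{\underline p^*}))\,d\mu_{\underline p^*} \;=\; \int_{[i]}\log|T'|\,d\mu_{\underline p^*}\;-\;p_i^*\,\lambda(\mu_{\underline p^*}).
$$
Dividing through by $p_i^*\,\lambda(\mu_{\underline p^*})$ then rearranges into the stated identity.

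The only real obstacle is bookkeeping around this \textquotedblleft Bernoulli decorrelation\textquotedblright{} and verifying that (2.5) is applicable in this setting; since we are working on the compact invariant Cantor set $E_n$ on which $\log|T'|$ is bounded and H\"older, the smoothness of pressure and the interchange of $u$- and $s$-derivatives follow directly from the results already cited, and no further delicate analysis is required.
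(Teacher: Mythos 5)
Your proposal is correct and follows the paper's proof essentially step for step: both express $\partial \lambda(\mu_{\underline p^*})/\partial p_i^*$ as a mixed second derivative of pressure via (2.4) and (2.6), apply the covariance formula (2.5) with the perturbation direction $\chi_{[i]}/p_i^*$, show that the infinite correlation series vanishes, and then expand the surviving zero-lag term to obtain the stated identity. The only (minor) difference is in how the correlation series is killed: you invoke the independence, under the Bernoulli product measure, of $\chi_{[i]}$ (a function of the first coordinate) and $(\log|T'|)\circ\sigma^k$ for $k\geq 1$ (functions of the later coordinates), whereas the paper routes the same fact through the transfer operator identity $\mathcal L_{f_{\underline p^*}}(\chi_{[i]}-p_i^*)=0$ together with the duality $\mathcal L_{f_{\underline p^*}}^*\mu_{\underline p^*}=\mu_{\underline p^*}$ --- two formulations of the same cancellation.
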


\begin{proof}
Using (2.4)  and the definition of $\lambda(\mu_{\underline p^*})$   we  can first rewrite 
$$
 \lambda(\mu_{\underline p^*})
  =  \frac{\partial P( f_{\underline p^*}  + t \log |T'|)}{\partial t}|_{t=0}
\hbox{ and }
\frac{\partial \lambda(\mu_{\underline p^*})}{\partial p_i^*} =  \frac{\partial^2 P( f_{\underline p^*} + 
s\chi_{[i]}/p_i^*  + t \log |T'|)}{\partial s\partial t}|_{t=0, s=0}.
\eqno(2.6)
$$
Next  we can use (2.5) 
with $f = f_{\underline p^*}$, $
g_1 = \chi_{[i]}/p_i^*$
and 
 $g_2 =  \log |T'|$
to write 
$$
\begin{aligned}
&\frac{\partial^2 P( f_{\underline p^*} +
s\chi_{[i]}/p_i^*  + t \log |T'|)}{\partial s\partial t}|_{t=0, s=0}\cr 
&=
\frac{1}{p_i^*}
 \int 
\left( \chi_{[i]} - p_i^*\right)
 \left( \log |T'| -  \int
 \log |T'|d\mu_{\underline p^*}\right)
d\mu_{\underline p^*} \cr
&\qquad+
\frac{2}{p_i^*}
\sum_{n=1}^\infty \int 
\left( \chi_{[i]} - p_i^*\right)
 \left(  \log |T'| - \int
 \log |T'|d\mu_{\underline p^*}\right) \circ \sigma^n
d\mu_{\underline p^*}.
\end{aligned}
\eqno(2.7)
$$
If we consider the transfer operator 
$\mathcal L_{f_{\underline p^*}}: C^0(E_n) \to C^0(E_n)$
defined by  
$$
\mathcal L_{f_{\underline p^*}} w(x) = \sum_{k=1}^n p_k^* w\left(\frac{1}{k+x}\right) \eqno(2.8)$$
which is the dual to the Koopman operator
(see \cite{ruelle})
then 
since the dual to the transfer operator satisfies 
$\mathcal L_{f_{\underline p^*}}^*\mu_{\underline p^*}=\mu_{\underline p^*}$ 
we can rewrite (2.7) as
$$
\begin{aligned}
&\frac{\partial^2 P( f_{\underline p^*} +
s\chi_{[i]}/p_i^*  +t \log |T'|)}{\partial s\partial t}|_{t=0, s=0}\cr
&=
\frac{1}{p_i^*}
 \int 
\left( \chi_{[i]} - p_i^*\right)
 \left( \log |T'| - \int
 \log |T'|d\mu_{\underline p^*}\right)
d\mu_{\underline p^*} \cr
&\qquad+
\frac{2}{p_i^*}
\sum_{n=1}^\infty \int 
\mathcal L_{f_{\underline p^*}}^n
\left( \chi_{[i]} - p_i^*\right)
 \left( \log |T'| -  \int
 \log |T'|d\mu_{\underline p^*}\right)
d\mu_{\underline p^*}.
\end{aligned}
\eqno(2.9)
$$
From the definition of
$\mathcal L_{f_{\underline p^*}}$
we  see that 
$\mathcal L_{f_{\underline p^*}}
\left( \chi_{[i]} - p_i^*\right) = 0
$
and we can deduce that the series in (2.9) vanishes
and then using (2.6) we can write
$$
\begin{aligned}
\frac{1}{\lambda(\mu_{\underline p^*})}
\frac{\partial \lambda(\mu_{\underline p^*})}{\partial p_i}
&=   \frac{1}{p_i^*}
\int 
\left( \chi_{[i]} - p_i^*\right)
 \left(  \frac{\log |T'|}{\int
 \log |T'|d\mu_{\underline p^*}} -  1\right)
d\mu_{\underline p}
 =
\frac{1}{p_i^*}
 \frac{\int_{[i]} 
\log |T'|
d\mu_{\underline p}}{\int
\log |T'|
d\mu_{\underline p}} - 1.
\end{aligned}
$$

\end{proof}

\noindent 
Using  the formulae in  Lemmas \ref{lemmaone} and 
\ref{lemmatwo} and the equality (2.3) 
we can rewrite (2.2)  
as 
$$
- (\log p_i^* + 1)  - d(\mu_{p^*}) \left( \frac{1}{p_i^*} \frac{\int_{[i]} \log |T'| d\mu_{p^*}}{\int \log |T'| d\mu_{p^*}} - 1\right)
= 
- (\log p_j^* + 1)  - d(\mu_{p^*}) \left( \frac{1}{p_j^*} \frac{\int_{[j]} \log |T'| d\mu_{p^*}}{\int \log |T'| d\mu_{p^*}} - 1\right)
$$
for all $1 \leq i,j \leq n$.  
Moreover, since  $2p_i^*\log i \leq \int_{[i]} \log |T'| d\mu_{p^*} \leq 2p_{i}^*\log (i+1)$ this implies that 
$$
2d(\mu_{p^*})  \log \left(\frac{i}{j+1}\right)
\leq 
\log \left(\frac{p_j^*}{p_i^*}\right)
\leq  2 d(\mu_{p^*}) \log \left(\frac{i+1}{j}\right)
\eqno(2.10)
$$
 for any $n \geq 2$ and $n \geq i > j$.
%

To construct $\underline p^\dagger \in \mathcal P$ we use a simple tightness argument.
For each sufficiently large  $n$, let   $\underline p^{(n)} \in \mathcal P_n$  denote a  measure maximizing $\mathcal P_n \ni  \underline p^* \mapsto d(\mu_{p^*})$. 
It then follows from (2.1) that   $\lim_{n\to+\infty} d(\underline p^{(n)}) = D > \frac{1}{2}$.  
Since  (2.10) applies to each of the  $\underline p^{(n)}$,
and $d(\mu_{\underline p^{(n)}})$ is arbitrarily close to $D$ for $n$ sufficiently large, 
 one can choose $\epsilon > 0$, $C > 0$ and $n_0 > 0$  such that 
 $$p_k^{(n)} \leq Ck^{-(D - \epsilon)}
 \hbox{  for all $k \geq 1$ and $n \geq n_0$}.
 $$
We can choose a  subsequence  $\underline p^{(n_r)}$ ($r \geq 1$) converging  
(using the usual diagonal argument) to some $\underline p^{\dagger} \in \mathcal P$.    
To see that $d(\mu_{\underline p^\dagger}) = D$  we first observe that 
 $d(\mu_{\underline p^{*(n_r)}}) > \frac{1}{2}$ for sufficiently large $r$.   We can deduce from (2.1),
 and the definitions of the entropies and Lyapunov exponents,
  that $d(\mu_{\underline p^{\dagger}}) = \lim_{r \to +\infty}d(\mu_{\underline p^{*(n_r)}}) = D$.  This completes the proof of part 1. 

 By applying (2.10) to $\underline p^{*(n_r)}$ and taking the limit $r\to +\infty$ shows that
the same bounds apply for $\underline p^\dagger$ (with $d(\mu_{\underline p^\dagger}) = D$) and this completes the proof of part 2.


To prove the final part of the theorem,  we use another  standard argument (cf.
\cite{PP}, Chapter 2, for the case of H\"older functions).  We can consider the
associated transfer operator $\mathcal L_{\underline p^\dagger}: C^1([0,1]) \to
C^1([0,1])$ defined (by analogy with (2.8)) as 
$$
\mathcal  L_{f_{\underline p^\dagger}} w(x) = \sum_{k=1}^\infty p^{\dagger}_k w\left( \frac{1}{k+x}\right) \hbox{ for } w \in C^1([0,1]).
$$
First observe that $ \mathcal L_{\underline p^\dagger} 1 = 1$, and thus
$\|\mathcal L_{\underline p^\dagger} w \|_\infty  \leq \|w\|_\infty$,  and 
$\left\| \frac{d}{dx} \left(\mathcal  L_{f_{\underline p^\dagger}}^2 w\right)(x) \right\|_\infty \leq \frac{1}{4}\|w\|_\infty$  (cf. \cite{PP}, Proposition 2.1).
This is sufficient to show that for any  $w \in C^1([0,1])$ one has $\|\mathcal
L_{\underline p^\dagger}^n w - \int w d\mu_{\underline p^\dagger}\|_\infty \to 0$ as $n \to +\infty$ (compare \cite{PP}, Theorem 2.2, (iv)).
We can deduce that for $v,w \in C^1([0,1])$ with $\int v d\mu_{\underline p^\dagger} = 0 = \int v d\mu_{\underline p^\dagger}$ then 
since  $\mathcal L_{\underline p^\dagger}^* \mu_{\underline p^\dagger}=\mu_{\underline p^\dagger}$ we have 
$$\left|\int v\circ T^n w d\mu_{\underline p^\dagger}\right|
= \left|\int v \mathcal L_{\underline p^\dagger}^n  w d\mu_{\underline p^\dagger} \right|
\leq \|v\|_\infty.  \| \mathcal L_{\underline p^\dagger}^nw\|_\infty
\to 0 \hbox{ as } n \to +\infty.$$
In particular, this implies that $\mu_{\underline p^\dagger}$ is strong mixing, and thus ergodic (cf. \cite{PP}, Proposition 2.4).


\section{Additional remarks}
\begin{rem}
The approach we have described should  apply beyond the Gauss map.   One of the best known classes are the R\'enyi $f$-expansions, where
in the definition of $T$ the function  $1/x$ is replaced by a more general (continuously differentiable)
monotone decreasing  function  $f:(0,1] \to [1, +\infty)$ and $\pi$  is replaced by a map  $\pi_f: \mathbb N^{\mathbb N} \to [0,1]$ (see \cite{schweiger}, chapter 10).
We  can  then  set $s_0 = \inf\{s > 0 \hbox{ : } \sum_{n=1}^\infty |(f^{-1})'(n)|^s < +\infty\}$ and define  $D_f$ by analogy with (2.1).
We would need to assume that $D_f > s_0$
and then one expects that  a version of Theorem \ref{main} holds in  this setting, where part 2. would now take the form $\underline p^\dagger_k \asymp  |(f^{-1})'(k)|^D$, for $k \geq 1$.

\end{rem}

\begin{rem}
The proof of Theorem \ref{main} also generalizes in the following way.   Let
 $f: (0,1] \to \mathbb R$ be a locally constant function of the form $f(x) = b_i$, say, if for $ \frac{1}{i+1} < x \leq \frac{1}{i}$, with super-polynomial decay (i.e., for any $\beta>0$, $|b_i| = O(i^{-\beta})$).
Given $\alpha \in \hbox{\rm int}\{\int f d\mu_{\underline p} \hbox{ : } \underline p \in \mathcal P\}$ we can  consider an expression common in multifractal analysis:
$$
D_f:= \sup\left\{
\frac{h(\mu)}{\int \log x d\mu}
\hbox{ : } h(\mu_{\underline p}), 
\lambda(\mu_{\underline p}) <+\infty
\hbox{ and } \int f d\mu= \alpha
\right\}. \eqno(3.1)
$$
Then there exists $\underline p^f
= (\underline p^f_k)_{k=1}^\infty \in \mathcal P$ such that:
$\mu_{\underline p^f}$ realises the supremum in (3.1); $p_k^{f} \asymp k^{-2 D_f}$ for $k \geq 1$; and 
 $\mu_{\underline p^f}$ is ergodic. 
\end{rem}

\begin{rem}
We can compute higher derivatives 
of $d(\mu_{\underline p})$ using 
higher derivatives of the pressure function.  However, these do not seem particularly useful.
\end{rem}


\begin{thebibliography}{OOO}
 
 \bibitem{BJ} 
 S. Baker and N. Jurga,
 Maximising Bernoulli measures and dimension gaps for countable branched systems, Preprint.
 
 \bibitem{Fan}
 A.-H. Fan  L. Liao and  J.-H. Ma,
On the frequency of partial quotients of regular
continued fractions,  Math. Proc. Cambridge Philos. Soc. 148 (2010), no. 1, 179-192. 

 \bibitem{Falconer}
 K. Falconer, {\it Fractal Geometry}, Wiley, New York, 2014.


\bibitem{KPW}
Y. Kifer, Y. Peres and B. Weiss. A dimension gap for continued fractions with independent digits. Israel J. Math. 124(1), (2001), 61-76.

\bibitem{KP}
J. R. Kinney and T. S. Pitcher,  The dimension of some sets defined in terms of f-expansions. Z. Wahrscheinlichkeitstheorie verw. Geb. 4 (1966), 293-315.

\bibitem{PP} W. Parry and M. Pollicott, 
Zeta functions and closed orbits for hyperbolic systems,
{\it Asterisque} (Societe  Mathematique de  France), 187-188 (1990) 1-268.

\bibitem{ruelle}
D. Ruelle,
{\it Thermodynamic formalism},  Encyclopedia of Mathematics and its Applications, 5. Addison-Wesley Publishing Co., Reading, Mass., 1978. 
 
 
 \bibitem{schweiger}
F. Schweiger,
{\it Continued fractions and their generalizations: A short history of $f$-expansions},  Docent Press, Boston, Mass., 2016
 

 
  \end{thebibliography}
\end{document}